\newtheorem{theorem}{Theorem}
\newtheorem{lemma}[theorem]{Lemma}
\newtheorem{proposition}{Proposition}
\theoremstyle{definition}
\newtheorem{remark}[theorem]{Remark}
\newcommand{\C}{\mathbb{C}}
\newcommand{\Z}{\mathbb{Z}}
\newcommand{\R}{\mathbb{R}}
\newcommand{\Grr}{{\mathrm{Gr}}_\R}
\newcommand{\Grc}{{\mathrm{Gr}}_\C}
\newcommand{\Gl}{{\mathrm{Gl}}}
\newcommand{\cC}{\mathcal{C}}
\newcommand{\cM}{\mathcal{M}}
\newcommand{\del}{\partial}
\newcommand{\delb}{\bar\partial}
\newcommand\hra{\hookrightarrow}
\newcommand\Real{\mathrm{Re}}
\newcommand\grad{\mathrm{grad}}
\newcommand\dist{\mathrm{dist}}
\newcommand\diag{\mathrm{diag}}
\def\bar{\overline}
\numberwithin{equation}{section}
\begin{document}
\title[Isotopy of codimension 2 real submanifolds]
{On complex points of codimension 2 submanifolds}
\author{Marko Slapar}
\address{University of Ljubljana, Faculty of Education, Kardeljeva Plo\v s\v cad 16, 1000 Ljubljana, Slovenia and Institute of Mathematics, Physics and Mechanics, jadranska 19, 1000 Ljubljana, Slovenia}
\email{marko.slapar@pef.uni-lj.si}
\thanks{Supported by the research program P1-0291 and research project J1-5432 at the Slovenian Research Agency.}

%
%
\subjclass[2000]{32V40, 32S20, 32F10}
\date{\today} 
\keywords{CR manifolds, complex points, q-complete neighborhoods}

\begin{abstract}
In this paper we study the structure of complex points of codimension $2$ real submanifolds in complex $n$ dimensional manifolds. We show that the local structure of a complex point up to isotopy only depends on their type (either elliptic or hyperbolic). We also show that any such submanifold can be smoothly isotoped into a submanifold that has $2$-strictly pseudoconvex neighborhood basis.   
\end{abstract}
\maketitle

\section{Introduction} 
Let $i\!:Y\hra X$ be a  real compact $2n$ dimensional manifold $Y$, smoothly embedded into an $n+1$ dimensional complex manifold $(X,J)$. The expected dimension of the complex tangent space $T^\C_pY=T_pY\cap JT_pY$  equals to $\max\{0,n-1\}$. Such points are called {\it{CR regular}}, and their complement in $Y$ consists of {\it{CR singular}} points, i.e., points with $\dim_\C T^\C_pY=n$. Thus CR singular points are exactly those points, for which $T_pY\subset T_pX$ is a complex subspace, and we rather call them {\it{complex points}}. By Thom transversality theorem \cite{Thom}, complex points are isolated  for generic embeddings.  If $Y$ is also oriented, we call a complex point $p$ {\it positive}, if the orientation of $T_pY$ agrees with the induced orientation of $T_pY$ as a complex subspace of $T_pX$, and {\it negative}, if the orientation is opposite. 

Using local coordinates on $Y$ and $X$ near a complex point $p$, we can assume that locally $i\!:V\hra U$, where $V$ is a neighborhood of $0=p$ in $\R^{2n}=\C^{n}$ and $U$ a neighborhood of $0=i(p)$ in $\C^{n+1}$. By the Gauss map $Di\!:y\mapsto T_y\in \Grr(2n,\C^{n+1})$, the complex point $0$ is mapped into $\Grc(n,\C^{n+1})\subset \Grr(2n,\C^{n+1})$. Genericity of $i$ means that the intersection of $Di(V)$ with $\Grc(n,\C^{n+1})$ is transverse. Note that the sign of the intersection does not depend on the orientation of $V$, and we call complex point $p$ {\it elliptic}, if the sign of the intersection is positive and {\it hyperbolic}, if the sign of the intersection is negative. We use this terms more freely than defined by Dolbeault, Tomassini and Zaitsev in \cite{DTZ1,DTZ2}, where the term elliptic is reserved for more special classes of complex points. We denote by $e(Y)$ the number of elliptic complex points and by $h(Y)$ the number of hyperbolic complex points (and $e_\pm(Y)$, $h_\pm(Y)$ in the case when $Y$ is oriented, where we can distinguish between positive and negative complex points). The algebraic counts $I(Y)=e(Y)-h(Y)$ and $I_{\pm}(Y)=e_\pm(Y)-h_\pm(Y)$, called \textit{Lai indices}, are isotopy invariant and can be expressed by formulas \cite{Lai}:

\[I=\chi(Y)+\sum_{k=1}^{\lfloor(n+1)/2\rfloor-1} <e(\nu Y)^{2k+1}\cup c_{n-2k-1}(X)|_Y,[Y]>\]

and in the oriented case

\[2I_\pm=\chi(Y)+\sum_{k=0}^{n}(\pm 1)^{k+1} <e(\nu Y)^{k}\cup c_{n-r}(X)|_Y,[Y]>,\]

\noindent where $\chi(Y)$ is the Euler characteristic of $Y$, $e(\nu Y)\in H^2(Y,\Z)$ is the Euler class of the normal bundle $\nu Y$ of $Y$ and  $c_j(X)\in H^{2j}(X,\Z)$ the $j-$th Chern class of $X$. 

The structure of complex points is well understood and already classical in the case $n=1$ (real surfaces in complex surfaces). Local theory has been first studied by Bishop \cite{Bis} and later mostly by Moser and Webster \cite{MoW}, where suitable normal forms have been found. Global theory has been studied by Eliashberg and Harlamov \cite{EH} and Forstneri\v c \cite{F}, where is was shown that a pair of elliptic and hyperbolic complex point can always be canceled by a $\cC^0$ small isotopy (the two complex points must be of the same sign, if the surface is oriented). From these works it also follows that the presence of elliptic complex points is the only obstruction to having tubular Stein neighborhood basis of the surface; see also \cite{S1}. The local structure up to quadratic part in dimension $n=2$ has been studied by Coffman \cite{C1}, and up to isotopy by the author in \cite{S2}. For $n>1$, some normal forms have been obtained recently by Yin and Huang in \cite{YH1,YH2} and Burcea \cite{Bur}. The analogous cancellation theorem to the one for $n=1$ was proved in \cite{S3}. In the case of $n>1$ one cannot get Stein neighborhoods for obvious topological reasons, but $2$-strictly pseudoconvex  neighborhoods after isotopy of the manifold $Y$ have been constructed by the author in \cite{S2} in the case $n=2$. Some results on Bishop discs \cite{Bis}, have been extended to higher dimensions in \cite{DTZ1,DTZ2}. 

\begin{theorem}\label{thm1} Every smooth embedding of a $2n$ dimensional compact real manifold $Y$ into a $n+1$ dimensional complex manifold $X$ can be deformed by a smooth isotopy to a manifold only having complex point of the following two types:
\begin{itemize}
\item[i)] $w=|z_1|^2+|z_2|^2+\cdots +|z_n|^2$,
\item[ii)] $w=\overline{z_1}^2+|z_2|^2+\cdots +|z_n|^2$.
\end{itemize}
The isotopy is $\cC^0$ close to the original embedding.
\end{theorem}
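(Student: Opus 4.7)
The plan is to work independently near each complex point of a generic embedding, constructing a $C^0$-small local isotopy that brings the graph into one of the two standard quadratic models, and then to combine these into a global isotopy. The argument has three ingredients: a cutoff of the higher-order terms, a deformation of the quadratic part through non-degenerate forms of the given type, and a cancellation of any auxiliary complex points that appear.

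After a generic $C^1$-small perturbation, all complex points are isolated and of the two types. Fix a complex point $p$, choose local holomorphic coordinates $(z,w)$ on $X$ near $i(p)=0$ with $T_0Y=\{w=0\}$, and write $Y$ locally as a graph $w=f(z)$, $f=Q+R$, where $Q$ is the quadratic part and $R=O(|z|^3)$. A biholomorphic change of the $w$-coordinate absorbing the holomorphic-in-$z$ part of $Q$ gives
\[
Q(z,\bar z)=\sum B_{ij}z_i\bar z_j+\sum C_{ij}\bar z_i\bar z_j
\]
with $C$ symmetric. Using a bump $\chi_\epsilon$ equal to $1$ on $B_{\epsilon/2}$ and supported in $B_\epsilon$, form the family $f_t=Q+(1-t\chi_\epsilon)R$; since $\partial Q/\partial\bar z$ is a non-degenerate real-linear map and $R=O(|z|^3)$, for $\epsilon$ small $\partial f_t/\partial\bar z$ vanishes in $B_\epsilon$ only at $0$ for every $t$, so this is an isotopy through embeddings carrying a single complex point of the original type. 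At $t=1$ we have $f=Q$ on $B_{\epsilon/2}$.

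Let $Q_{std}$ be the standard model of the matching type, and choose a smooth path $\{Q_s\}_{s\in[0,1]}$ of non-degenerate quadratic forms of the same type with $Q_0=Q$ and $Q_1=Q_{std}$. Realize this deformation as an isotopy by
\[
g_s(z)=Q(z,\bar z)+\chi_\delta(z)\bigl(Q_s(z,\bar z)-Q(z,\bar z)\bigr),\qquad \delta<\epsilon/2,
\]
a $C^0$-isotopy of size $O(\delta^2)$, with $g_1=Q_{std}$ on $B_{\delta/2}$ and $g_1=Q$ outside $B_\delta$. The blending in the annulus $B_\delta\setminus B_{\delta/2}$ can create new complex points; however, because $\partial g_s/\partial\bar z$ agrees with $\partial Q/\partial\bar z$ on $\partial B_\delta$ for every $s$, the sum of Poincar\'e--Hopf indices of complex points of $g_s$ inside $B_\delta$ is constant in $s$, so the extras must come in cancellable elliptic--hyperbolic pairs (of matching signs in the oriented case). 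They are removed by the cancellation theorem of \cite{S3} via a further $C^0$-small isotopy supported in $B_\delta$. Applying this procedure in disjoint neighborhoods of all complex points yields the global isotopy.

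The crux is the connectedness claim underlying the second step: that the space of non-degenerate quadratic forms $\sum B_{ij}z_i\bar z_j+\sum C_{ij}\bar z_i\bar z_j$ (with $C$ symmetric) of a given type is path-connected. For the elliptic case, one normalizes $B$ to a positive-definite form using the $GL_n(\mathbb{C})$-action on $z$ together with a rescaling of $w$ and then contracts $C$ linearly to $0$. For the hyperbolic case with $n\geq 2$, where several inequivalent local CR normal forms coexist in the literature \cite{C1,Bur,YH1,YH2}, one must verify that every non-degenerate hyperbolic quadratic form can be joined to the model $\bar z_1^2+|z_2|^2+\cdots+|z_n|^2$ through non-degenerate hyperbolic forms. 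This amounts to realizing the connectedness of the negative-determinant component of $GL(2n,\mathbb R)$ by explicit deformations within the admissible subspace of linearizations $z\mapsto B^Tz+2C\bar z$; establishing this path-connectedness is the main obstacle.
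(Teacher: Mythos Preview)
Your overall architecture matches the paper's: reduce to isolated quadratic complex points, then realize a path of nondegenerate quadratic forms of the given type as a local isotopy. The place where your argument breaks down is exactly the one you flag yourself, namely the path-connectedness of the elliptic and hyperbolic loci in $\cM_n$; the difficulty is that your sketch for the elliptic case is not merely incomplete but incorrect. The Hermitian part (your $B$, the paper's $A$) of an elliptic point need not be $*$-congruent, even after a unimodular rescaling of $w$, to a positive-definite matrix: for instance $A=\diag(1,-1)$, $B=0$ is elliptic. And even when $A=I$, the linear contraction $B\mapsto (1-t)B$ can leave the elliptic locus: with $n=2$ and $B=\diag(2,2)$ one has $\det\left[\begin{smallmatrix}I&(1-t)B\\(1-t)B&I\end{smallmatrix}\right]=(1-4(1-t)^2)^2$, which vanishes at $t=1/2$. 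So neither step of your elliptic argument goes through, and you offer no argument at all in the hyperbolic case.

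The paper's proof devotes essentially all of its effort to this connectedness statement. After a perturbation making $B$ nonsingular it normalizes $B$ to $I$ by $^T$-congruence, so that the sign of $\det\left[\begin{smallmatrix}A&I\\I&\bar A\end{smallmatrix}\right]=\det(A\bar A-I)$ is governed by the eigenvalues of $A\bar A$. It then invokes the consimilarity canonical form for $A$ (Hong--Horn), perturbs to make the positive eigenvalues of $A\bar A$ simple and the negative/nonreal ones appear in $2\times2$ blocks, and finally writes explicit nondegenerate homotopies in $\cM_1$ and $\cM_2$ for each resulting summand, moving everything to $(I_n,0)$ or $(0,1)\oplus(I_{n-1},0)$. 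This block-by-block analysis via consimilarity is the missing idea in your proposal.

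On the interpolation step you do differ from the paper, and your variant is viable. The paper avoids creating auxiliary complex points altogether by using the radial parametrization $t=(|z|/\varepsilon)^{1/n}$, i.e.\ setting $w=\bar z^{T}A_{(|z|/\varepsilon)^{1/n}}z+\Real\bigl(z^{T}B_{(|z|/\varepsilon)^{1/n}}z\bigr)$, and citing \cite{S1} for the fact that $0$ remains the unique complex point; this sidesteps any appeal to the cancellation theorem. Your cutoff-then-cancel route via \cite{S3} would also work, but it is a detour rather than an alternative to proving the connectedness of $\cM_n^\pm$, which remains the essential gap.
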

 
\begin{remark} In the above theorem elliptic complex points are deformed to be of type i) and hyperbolic to be of type ii).  We can take any other suitable model situations for complex points. For example $w=\frac{1}{2}\bar z_1^2+ \frac{1}{2}\bar z_2^2+\cdots + \frac{1}{2}\bar z_n^2$ and $w=|z_1|^2+ \frac{1}{2}\bar z_2^2+\cdots + \frac{1}{2}\bar z_n^2$. The first one is elliptic for even $n$ and hyperbolic for odd $n$, and the second one is elliptic for odd $n$ and hyperbolic for even $n$.  
\end{remark}

Theorem \ref{thm1}, together with the cancellation theorem proved in \cite{S3}, shows that Lai indices are the only topological invariants of complex points up to isotopy. 

We call a $\cC^2$ function $\phi$ \textit{$q$-pseudoconvex}, if the Levi form $\frac{i}{2}\del\delb \phi$ has at least $q$ nonnegative eigenvalues, and \textit{$q$-strictly pseudoconvex} if at least $q$ eigenvalues are strictly positive. Level sets of $q$-strictly pseudoconvex functions are called \textit{$q-1$-strictly pseudoconvex} and we call a domain \textit{$q$-strictly pseudoconvex}, if it has $q-1$-strictly pseudoconvex boundary.    

\begin{theorem}\label{thm2} Let $Y\hookrightarrow X$ be a smooth embedding of a compact $2n$-manifold in a complex $n+1$ dimensional manifold. After a $\cC^0$ small smooth isotopy, $Y$ has a $2$-strictly pseudoconvex tubular neighborhood basis.  
\end{theorem}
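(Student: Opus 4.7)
The plan is to apply Theorem~\ref{thm1} to put all complex points of $Y$ into standard form, and then construct an $\epsilon$-dependent family of $2$-strictly pseudoconvex tubular neighborhoods by locally correcting the squared-distance function near each complex point. After Theorem~\ref{thm1}, in suitable coordinates $(z,w)\in\C^n\times\C$ near any complex point $p$, one has $Y=\{w=f(z,\bar z)\}$ with $f(z,\bar z)=|z|^2$ (elliptic) or $f(z,\bar z)=\bar z_1^2+|z_2|^2+\cdots+|z_n|^2$ (hyperbolic). Fix an auxiliary Hermitian metric on $X$ and let $\rho(x)=\dist(x,Y)^2$. At any CR regular point $p\in Y$ one has the orthogonal splitting $T_pX=T^\C_pY\oplus H_p\oplus JH_p$, with $H_p\subset T_pY$ the totally real $2$-plane and $JH_p=\nu_pY$; the relation $L(v,v)=\frac{1}{2}(\mathrm{Hess}(v,v)+\mathrm{Hess}(Jv,Jv))$ then shows that the Levi form of $\rho$ at $p$ has exactly $2$ strictly positive eigenvalues on $H_p\oplus\nu_pY$ and $n-1$ zero eigenvalues on $T^\C_pY$, so $\rho$ is $2$-strictly plurisubharmonic on a uniform tubular neighborhood of the CR regular part of $Y$.

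At a complex point $p$, however, $T^\C_pY$ equals the full complex hyperplane $T_pY$ and the Levi form of $\rho$ retains only the single positive $\partial_w$-eigenvalue. The plan is to restore a second positive eigenvalue locally by adding a term proportional to $|z|^2$, which contributes $\alpha I_n$ to the $z$-block of the Levi form. On the axial ``top'' boundary point $(0,w_0)$ of $\{\rho+\alpha|z|^2<\epsilon\}$ (where $|w_0|^2\approx\epsilon$), the complex tangent is the $z$-hyperplane and the restricted Levi form is $(\alpha-2\Real(w_0))I_n$, which is positive for $\alpha>2\sqrt\epsilon$. The hyperbolic case is identical because $|w-(\bar z_1^2+|z_2|^2+\cdots+|z_n|^2)|^2$ has the same structure at the origin. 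Globally, for each small $\epsilon>0$, choose cutoffs $\chi_i^{(\epsilon)}$ equal to $1$ on $\{|z^{(i)}|\le R(\epsilon)\}$ and supported in $\{|z^{(i)}|\le 2R(\epsilon)\}$ around each complex point $p_i$, and set
\[
\Phi_\epsilon:=\rho+\alpha(\epsilon)\sum_i\chi_i^{(\epsilon)}(z^{(i)})\,|z^{(i)}|^2,
\]
with $\alpha(\epsilon)=3\sqrt\epsilon$ and $R(\epsilon)=\epsilon^{1/4}/4$. Then $Y\subset\{\Phi_\epsilon<\epsilon\}$ (the correction on $Y$ is bounded by $4\alpha R^2=\tfrac34\epsilon$), the sublevel sets shrink to $Y$ as $\epsilon\to 0$, and $\Phi_\epsilon$ is $2$-strictly plurisubharmonic on a neighborhood of the boundary $\partial\{\Phi_\epsilon<\epsilon\}$ by combining the CR regular estimate, the local computation above, and continuity across the transition zones.

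The main obstacle is a genuine obstruction: any smooth $\Phi\ge 0$ with $\Phi^{-1}(0)=Y$ has Levi form with at most one strictly positive eigenvalue at each complex point, because the vanishing of $\Phi$ on the complex hyperplane $T_pY$ forces the top $n\times n$ block of the Hessian (hence the Levi form) to vanish at $p$. Thus no single global $2$-strictly plurisubharmonic defining function can work, and the construction must be $\epsilon$-dependent, with correction terms that are positive on $Y$ inside the cutoff supports. The scaling $\alpha\sim\sqrt\epsilon$, $R\sim\epsilon^{1/4}$ is essentially forced by the twin constraints $\alpha>2\sqrt\epsilon$ (Levi positivity above each complex point) and $4\alpha R^2<\epsilon$ (containment of $Y$ in the sublevel set). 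The most delicate step is verifying uniform Levi positivity across the transition zones $\{R(\epsilon)<|z^{(i)}|<2R(\epsilon)\}$, where both the smaller positive eigenvalue of the Levi form of $\rho$ and the correction's positive contribution have size $O(\sqrt\epsilon)$ and must be combined carefully, while also absorbing the additional Levi terms coming from derivatives of the cutoff.
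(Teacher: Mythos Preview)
Your framework—apply Theorem~\ref{thm1}, then build defining functions that are $2$-strictly plurisubharmonic near $Y$—matches the paper's, but your implementation diverges from it and introduces an unnecessary complication.

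The paper avoids the $\epsilon$-dependent family entirely. Near each complex point it uses the model functions $(1+|z|^2)\,|w-f(z,\bar z)|^2$ (Lemmas~\ref{lem1} and~\ref{lem2}, with the alternative normal forms $f=\tfrac12\sum\bar z_k^2$ and $f=|z_1|^2+\tfrac12\sum_{k\ge2}\bar z_k^2$); the multiplicative factor $(1+|z|^2)$ already restores the missing Levi positivity \emph{off} $Y$. These local pieces are patched with a Chirka-type function on the CR-regular part to produce a \emph{single} nonnegative $\phi$ with $\phi^{-1}(0)=Y$ that is $2$-strictly plurisubharmonic everywhere except at the finitely many complex points themselves. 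Since those points lie in the interior of every sublevel set $\{\phi<\epsilon\}$, the boundaries are all $2$-strictly pseudoconvex and one is done.

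Your observation that no nonnegative $\Phi$ vanishing on $Y$ can be $2$-strictly plurisubharmonic \emph{at} a complex point is correct, but it is not an obstruction to the theorem: the failure occurs only at isolated interior points and never on $\partial\{\Phi<\epsilon\}$. Recognizing this makes the entire $\epsilon$-dependent cutoff machinery unnecessary. As written, your approach has a genuine gap: in the transition annuli $R(\epsilon)<|z|<2R(\epsilon)$ the good contribution $\alpha\chi\sim\sqrt\epsilon$, the bad term $\alpha|z|^2\,\partial\bar\partial\chi\sim\sqrt\epsilon$, and the cross terms are all of the same order, so the scaling alone does not decide the sign; you would need a sharp constant comparison that you have not supplied. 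The paper's multiplicative correction sidesteps this issue completely.
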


In the above theorem, we call a neighborhood tubular, if it is diffeomorphic to the normal bundle of $Y$. The above results extend the results from \cite{S2}.

\section{On normal forms of complex points up to quadratic term}

Using appropriate local coordinates, we can assume that near an isolated complex point $p\in Y$, the manifold $Y$ is given by the graph of a smooth function $f:\C^n\to \C$, where $p=0$ and the complex tangent space at $0$ equals $\C^n$. Using Taylor expansion of $f$, we can arrange $Y$ to be locally of the form
\begin{equation}\label{form} w=\bar z^TAz+\Real (z^TBz)+o(|z|^2),\end{equation}
where $z=(z_1,z_2,\ldots,z_n)$, and $A,B$ are $n\times n$ complex matrices. The matrix $B$ can be assumed to be symmetric. Calculating the intersection index, \cite{C1}, one can see that the point $p$ is elliptic, if the determinant of
\begin{equation}\label{det}\left[
\begin{array}{cc}
A & \bar B \\
B & \bar A
\end{array}\right]
\end{equation}
is positive, and hyperbolic, if it is negative. For this reason, we will call a pair of matrices $(A,B)$, $B=B^T$, elliptic, if the above determinant is positive, and hyperbolic, if it is negative. Using a linear change of coordinates of the form
\[\begin{pmatrix}
z \\ 
w 
\end{pmatrix}
=
\begin{pmatrix}
P & *\\
0 & c
\end{pmatrix}
\begin{pmatrix}
\widetilde z\\
\widetilde w
\end{pmatrix},
\]
where $P$ is some nonsingular $n\times n$ matrix, the form ($\ref{form}$) is transformed into

$$w=\frac{1}{c}\bar z^TP^*APz+\frac{1}{2}\left(\frac{1}{c}z^TP^TBPz+\frac{1}{c}{\bar z}^T{\bar P}^T\bar B\bar P\bar z\right)+o(|z|^2),$$

\noindent where we have dropped the $\sim$ in the new coordinates. After a quadratic holomorphic change in $w$, the equation becomes

$$w=\frac{1}{c}\bar z^TP^*APz+\frac{1}{\overline{c}}\Real (z^TP^TBPz)+o(|z|^2).$$

We see the group $G=S^1\times \Gl(n,\C)/\Z_2$ (the $\Z_2$ quotient means $P\sim-P$) acts on the space of pairs of matrices 
\[\cM_n=\{(A,B);\ A,B\in M_{n}(\C),\ B=B^T\},\] 
that determine the quadratic part in (\ref{form}), by
\[(\zeta,P)(A,B)=(\zeta P^*AP,\bar \zeta P^TAP).\]  
Any other biholomorphic change of coordinates, preserving the complex point and the tangent space at that point, has the same effect on the quadratic part as an above linear change, since the transformation up to quadratic term is determined by its linearization at $0$. The moduli space of quadratic terms of (\ref{form}) up to biholomorphic change of coordinates is thus equal to the quotient space $\cM_n/G$. We call two pairs $(A_0,B_0),\ (A_1, B_1)\in M$ {\it G-congruent}, if they are in the same orbit of this group action. Let us denote by $\cM_n^+$ the set of elliptic pairs from $\cM_n$, and by $\cM_n^-$ the part corresponding to hyperbolic pairs. The corresponding quotients $\cM_n^\pm/G$ are the moduli spaces of elliptic and hyperbolic complex points up to their quadratic term.  

It is not easy to find nice canonical forms for pairs $(A,B)\in \cM_n$ up to $G$-congruence. One can first put $A$ in a canonical form under $^*$congruence using a result of Horn and Sergeichuk \cite[Theorem 1]{HS}, and then use the $^T$congruence with matrices stabilizing that canonical form to simplify $B$. The complete description in dimension $n=2$ can be found in \cite[Theorem 7.2]{C1}, but the process seems to be too computational to nicely adapt to higher dimensions. The other approach is the following. By Tagaki factorization \cite[Corollary 4.4.4]{HJ}, any complex symmetric matrix is $^T$congruent with a unitary matrix to a diagonal matrix with nonnegative entries. Assuming that the matrix $B$ is nonsingular, we can thus reduce $B$ to the identity matrix by a $^T$congruence. One can than try to simplify $A$ by finding a canonical form for $^*$congruence by complex orthogonal matrices, i.e. with the property $VV^T=I$. If for example $A>0$, one can put (\ref{form}) by a biholomorphic change of coordinates into a form
\[w=|z|^2+\sum_{k=1}^{n}\frac{\gamma_k}{2}\left(z^2+\bar z^2\right)+o(|z|^2),\]
where $0\le\gamma_1\le\gamma_2\le\cdots\le\gamma_{n}$ are called generalized Bishop invariants. This might be well known, but we have not seen it mentioned in print. If $A$ be positive semidefinite and $B|_V:V\to V$ is nondegenerate, where $V$ is the direct sum of eigenspaces of $A$, corresponding to strictly positive eigenvalues of $A$, then $B$ can also be made diagonal (see \cite[Theorem 4.5.15]{HJ}). So in this case, one can put (\ref{form}) by a biholomorphic change of coordinates into a form
\[w=\sum_{k=1}^{j}|z_k|^2+\cdots+\sum_{k=1}^{n}\frac{\gamma_k}{2}\left(z^2+\bar z^2\right)+o(|z|^2),\]
where $0\le\gamma_1\le\gamma_2\le\cdots\le\gamma_{k}$ and $\gamma_{j+1},\ldots,\gamma_{n}$ are either $0$ or $1$. Example $A=\left[\begin{smallmatrix} 1&0\\0&0\end{smallmatrix}\right]$, $B=\left[\begin{smallmatrix} 0&b\\ b&0\end{smallmatrix}\right]$, shows that such nice form cannot be achieved without some extra assumption on $B$. The situation is even more complicated for general Hermitian $A$ (note that a complex point can be made quadratically flat, i.e. quadratic part is real, if and only if $A$ is Hermitian up to multiplication by a complex number). 
\begin{remark}\label{split} We can easily see, by changing the order of rows and column of the matrix in (\ref{det}), that if 
$A=A_1\oplus A_2$ and $B=B_1\oplus B_2$, where $A_1,B_1$ are $n'\times n'$ matrices and $A_2,B_2$ are $(n-n')\times (n-n')$ matrices, that 
\[\det\left[
\begin{array}{cc}
A & \bar B \\
B & \bar A
\end{array}\right]=\det\left[
\begin{array}{cc}
A_1 & \bar B_1 \\
B_1 & \bar A_1
\end{array}\right]\det\left[
\begin{array}{cc}
A_2 & \bar B_2 \\
B_2 & \bar A_2
\end{array}\right].
\]
If a pair of matrices $(A,B)\in\cM_n$ has such splitting, we will write $(A,B)=(A_1,B_1)\oplus(A_2,B_2)$. If, for example, $(A_1,B_1)\in\cM_{n'}^+$, the ellipticity or hyperbolicity of $(A,B)$ is determined by that of $(A_2,B_2)$.    
\end{remark}
\section{Consimilarity of matrices}
Two matrices $A,B\in M_n$ are called {\it consimilar}, if there exists a matrix $S\in \Gl(n,\C)$, so that $B=SA\bar S^{-1}$. It turns out that two nondegenerate matrices $A$ and $B$ are consimilar if and only if the matrices $A\bar A$ and $B\bar B$ are similar (the statement is slightly more complicated for singular matrices)\cite[Theorem 4.1]{HH}. For any matrix $A$, the nonreal eigenvalues of $A\bar A$ come in conjugate pairs $c, \bar c$ (the characteristic polynomial has real coefficients), and the negative eigenvalues all have even algebraic multiplicity (\cite[Section 4.6]{HJ}). 

Let us now explain the canonical form under consimilarity of a nonsingular matrix $A$; the proof can be found in \cite{HH}. We will use the notation $J_k(\lambda)$ for the $k\times k$ Jordan block with $\lambda$ on the diagonal. Let
\[J_{pos}(A\bar A)\oplus J_{neg}(A\bar A)\oplus J_{com}(A\bar A)\]
be the Jordan decomposition of $A\bar A$, where
\begin{itemize}
\item $J_{pos}(A\bar A)=J_{k_1}(\mu_1)\oplus\cdots\oplus J_{k_r}(\mu_r)$,
\item $J_{neg}(A\bar A)=\left[J_{l_1}(\nu_1)\oplus J_{l_1}(\nu_1)\right]\oplus\cdots\oplus \left[J_{l_s}(\mu_s)\oplus J_{l_s}(\mu_s)\right]$, 
\item $J_{com}(A\bar A)=\left[J_{m_1}(c_1)\oplus J_{m_1}(\bar{c_1})\right]\oplus\cdots\oplus \left[J_{m_u}(c_u)\oplus J_{m_u}(\bar{c_u})\right]$,
\end{itemize}
where $\mu_i$ are positive eigenvalues, $\nu_i$ negative eigenvalues and $c_i,\bar{c_i}$ conjugate pairs of nonreal eigenvalues of $A\bar A$. Then the canonical form of $A$ under consimilarity is given (uniquely up to permutation of direct summands) by 
\begin{equation}\label{canonical}J_{P}(A)\oplus Q_{N}(A)\oplus Q_{C}(A)\end{equation}
where  
\begin{itemize}
\item $J_{P}(A)=J_{k_1}(\sqrt{\mu_1})\oplus\cdots\oplus J_{k_r}(\sqrt{\mu_r})$,
\item $J_{N}(A)=\left[\begin{smallmatrix}0&I_{l_1}\\J_{l_1}(\nu_1)&0\end{smallmatrix}\right]\oplus\cdots\oplus \left[\begin{smallmatrix}0&I_{l_s}\\J_{l_s}(\nu_s)&0\end{smallmatrix}\right]$, 
\item $J_{C}(A)=\left[\begin{smallmatrix}0&I_{m_1}\\J_{m_1}(c_1)&0\end{smallmatrix}\right]\oplus\cdots\oplus \left[\begin{smallmatrix}0&I_{m_u}\\J_{m_u}(c_u)&0\end{smallmatrix}\right]$.
\end{itemize}
By $I_k$ we denote the $k\times k$ identity matrix.
 
\begin{proposition}\label{prop} Let $A$ be a nonsingular matrix. After a small (generic) perturbation $\widetilde A$ of $A$, there exists a nonsingular complex matrix $S$, so that
\begin{equation}\label{diagonal}S\widetilde{A}\bar S^{-1}=\left[\begin{matrix} D&0\\ 0&\Lambda\end{matrix}\right],\end{equation}
where $D$ is diagonal with strictly positive entries, and $\Lambda$ is a block diagonal matrix with $2\times 2$ blocks of the form $\left[
\begin{smallmatrix}
0 & 1 \\                                                                                     
\lambda_j & 0
\end{smallmatrix}\right]$, with $\lambda_j$ all either nonreal or strictly negative (all can be chosen do be distinct), and the number of each type (strictly positive, strictly negative or nonreal) entries in the canonical forms of $A$ and $\widetilde A$ under consimilarity is the same.
\end{proposition}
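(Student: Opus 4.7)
The plan is to use consimilarity to reduce $A$ to its canonical form, perturb inside each Jordan block by a carefully chosen diagonal perturbation that both breaks the block and preserves the sign type of the resulting eigenvalues, and then transfer the perturbation back to $A$. By the canonical form recalled above, there exists a nonsingular $S_0$ with $S_0 A \bar S_0^{-1}=C:=J_P(A)\oplus J_N(A)\oplus J_C(A)$, so it is enough to find a suitable perturbation $\widetilde C$ of $C$: then $\widetilde A:=S_0^{-1}\widetilde C\,\bar S_0$ is a perturbation of $A$ of comparable magnitude that shares the consimilarity canonical form of $\widetilde C$.

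Next, perturb each summand of $C$ by a small generic diagonal term: in a positive block $J_{k_i}(\sqrt{\mu_i})$ add $\diag(\delta_{i,1},\ldots,\delta_{i,k_i})$ with distinct small real $\delta_{i,l}$; in a negative block $\left[\begin{smallmatrix} 0 & I_{l_j}\\ J_{l_j}(\nu_j) & 0\end{smallmatrix}\right]$ replace $J_{l_j}(\nu_j)$ by $J_{l_j}(\nu_j)+\diag(\delta_{j,1},\ldots,\delta_{j,l_j})$ for distinct small real $\delta_{j,l}$; in a complex block do the analogous replacement with distinct small complex $\delta_{k,l}$ chosen so that $c_k+\delta_{k,l}$ remain non-real. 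Perturbing along the diagonal keeps each block of $\widetilde C$ (block) upper triangular with the perturbed eigenvalues visible on the diagonal, so the sign or nonreality of every eigenvalue of $\widetilde C$, and hence of $\widetilde C\,\bar{\widetilde C}$, is under direct control. A short block-by-block computation then shows: each original positive block contributes $k_i$ simple positive eigenvalues $(\sqrt{\mu_i}+\delta_{i,l})^2$; each original negative block, being real, squares to the block diagonal consisting of two copies of the upper triangular matrix $J_{l_j}(\nu_j)+\diag(\delta_{j,1},\ldots,\delta_{j,l_j})$, contributing $l_j$ distinct negative eigenvalues $\nu_j+\delta_{j,l}$ each semisimple of multiplicity $2$; each original complex block similarly contributes $m_k$ distinct non-real conjugate pairs, each semisimple of multiplicity $2$. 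Thus the type counts (numbers of positive, negative, and non-real eigenvalues) of $\widetilde C\,\bar{\widetilde C}$ match those of $A\bar A$, and every Jordan block of $\widetilde C\,\bar{\widetilde C}$ has the smallest possible size. Applying the canonical form classification to $\widetilde C$ now produces the direct sum of $1\times 1$ blocks $(\sqrt{\mu_i}+\delta_{i,l})$ and $2\times 2$ blocks $\left[\begin{smallmatrix} 0 & 1\\ \lambda & 0\end{smallmatrix}\right]$ with $\lambda$ negative or non-real, which, after collecting summands, is exactly the form $\left[\begin{smallmatrix} D & 0\\ 0 & \Lambda\end{smallmatrix}\right]$ of the statement; pairwise distinctness of all $\lambda_j$ is an open dense condition on the $\delta$'s.

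The main obstacle, and the reason a purely arbitrary small perturbation will not suffice, is that splitting a Jordan block while controlling the sign type of the resulting eigenvalues is not automatic. For example, perturbing $J_3(\sqrt{\mu})$ ($\mu>0$) by the small matrix with $\epsilon$ in the $(3,1)$ entry gives eigenvalues $\sqrt{\mu}+\zeta^j\epsilon^{1/3}$ for $\zeta=e^{2\pi i/3}$, $j=0,1,2$, so the associated $\widetilde A\,\bar{\widetilde A}$ acquires one simple positive root together with a non-real conjugate pair, changing the type count from a single positive block of size $3$ to a positive block of size $1$ plus a complex pair. The triangular diagonal perturbation above sidesteps this pitfall because every eigenvalue of $\widetilde C$ stays close to the corresponding eigenvalue of $C$ and retains its type.
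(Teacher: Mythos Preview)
Your proposal is correct and follows essentially the same approach as the paper: pass to the consimilarity canonical form, add a small diagonal perturbation to each summand so that the perturbed $\widetilde C\,\bar{\widetilde C}$ becomes (block) upper triangular with controlled diagonal, and then read off the canonical form of $\widetilde C$. The one slip is in the complex case, where each non-real eigenvalue of $\widetilde C\,\bar{\widetilde C}$ is actually simple (not of multiplicity $2$ as you write), but this does not affect the conclusion that the resulting consimilarity blocks are the desired $2\times 2$ ones.
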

\begin{proof}
Let $T$ be such that $TA\bar T^{-1}$ is of the form (\ref{canonical}). Let $J_k(\mu)$ be one of the Jordan blocks in the decomposition of $J_P$. Let $E=\diag(\epsilon_1,\epsilon_2,\ldots,\epsilon_k)$ be the $k\times k$ diagonal matrix with $\epsilon_1,\epsilon_2,\ldots,\epsilon_k$ on the diagonal, where $\epsilon_i$ are all small and real, and $\epsilon_i\ne\epsilon_j$ if $i\ne j$. Let  
$J_k^E(\mu)=J_k(\mu)+E$.
Then 
\begin{equation}\label{e1} J_k^E(\mu)\overline{J_k^E(\mu)}=\diag\left((\mu+\epsilon_1)^2,(\mu+\epsilon_2)^2,\ldots,(\mu+\epsilon_k)^2\right)+U
,\end{equation}
where $U$ is some strictly upper triangular matrix. Denote by $\widetilde{J_P(A)}$ the perturbation of the matrix $J_P(A)$ by doing all such perturbations on its direct summands that are not $1 \times 1$ matrices.  
On the blocks $\left[\begin{smallmatrix}0&I_{m}\\J_{m}(b)&0\end{smallmatrix}\right]$ from $Q_N(A)$ or $Q_C(A)$, we do a similar perturbation. Let $\Delta=\diag(\delta_1,\ldots,\delta_m)$, where $\delta_i$ are small, all distinct and positive. Then 
\begin{equation}\label{e2}\left[\begin{smallmatrix}0&I_m\\J_m(b)+\Delta&0\end{smallmatrix}\right] 
\overline{\left[\begin{smallmatrix}0&I_m \\
J_m(b)+\Delta&0\end{smallmatrix}\right]}
=\diag\left(b+\delta_1,\ldots,b+\delta_m,\bar b+\delta_1,\ldots,\bar b+\delta_n\right)+V, \end{equation}
where $V$ is some strictly upper triangular matrix. Let $\widetilde{J_N(A)}$ and $\widetilde{J_C(A)}$ be the results of doing such perturbation on all the direct summand of $Q_N(A)$ and $Q_C(A)$ that are not $2\times 2$ matrices. Let
\[\widetilde{A}=T^{-1}(\widetilde{J_P(A)}\oplus \widetilde{Q_N(A)}\oplus \widetilde{Q_C(A)})\bar T.\]
We see from (\ref{e1}) and (\ref{e2}) that all positive and nonreal eigenvalues of $\widetilde{A}\overline{\widetilde{A}}$ are distinct and have geometric multiplicity $1$, while all negative eigenvalues have both algebraic and geometric multiplicity equal to $2$. The nonreal eigenvalues appear in conjugate pairs. If the perturbations are small enough, the number of each of the type (positive, negative or nonreal) of eigenvalues, counted with multiplicity, is the same as that of $A\bar A$, so the canonical form of $\widetilde A$ under consimilarity is of the desired form.    
\end{proof}            

\section{Proof of main theorems}
Let us first introduce some notations that we use in the proofs. A homotopy of pairs $(A_t,B_t)\in M_n$ is a smooth map from the interval $[0,1]$ into $\cM_n$, meaning that $A_t,B_t$ are $n\times n$ matrices, and $B_t=B_t^T$. We call such homotopy nondegenerate, if the determinant $\left|\begin{smallmatrix} A_t &\bar B_t\\ B_t&\bar A_t\end{smallmatrix}\right|$ is never $0$. Such a homotopy is completely contained in either $\cM_n^+$ or $M_n^-$ and thus preserves ellipticity or hyperbolicity. Since $G$ is path connected, any two $G$-congruent pairs $(A_0,B_0)$ and $(A_1,B_1)$ can be joined by a nondegenerate homotopy of pairs, since the sign of the determinant is preserved by $G$-congruency.  
\begin{proposition} Let $(A,B)\in \cM_n$. If $(A,B)$ is elliptic, then there exists a nondegenerate smooth homotopy from $(A,B)$ to the pair $(I_n,0_n)$. If $(A,B)$ is hyperbolic, then there exists a smooth nondegenerate homotopy from $(A,B)$ to $(0,1)\oplus (I_{n-1},0_{n-1})$.\label{connected}\end{proposition}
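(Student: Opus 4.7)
The strategy is to reduce $(A,B)$ via small perturbations and $G$-congruences to a direct sum of $1$- and $2$-dimensional summands (in the sense of Remark~\ref{split}), then construct nondegenerate homotopies block by block.

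Since $\cM_n^+$ and $\cM_n^-$ are open in $\cM_n$, a small generic perturbation (joined to $(A,B)$ by a line segment in $\cM_n^\pm$) lets me assume $A$ is nonsingular. A second small perturbation invokes Proposition~\ref{prop}, producing $S\in\Gl(n,\C)$ so that $SA\bar S^{-1}$ is block diagonal with $1\times 1$ positive entries and $2\times 2$ blocks $\left[\begin{smallmatrix}0&1\\ \lambda_j&0\end{smallmatrix}\right]$ ($\lambda_j$ strictly negative or non-real). Although consimilarity is not literally a $G$-action, the block structure it reveals can be promoted to a $G$-congruent splitting by combining the data of $S$ with a Takagi factorization of the resulting $B$: the outcome is a decomposition $(A,B)=(A_1,B_1)\oplus\cdots\oplus(A_k,B_k)$ with each $\dim A_j\in\{1,2\}$. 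By Remark~\ref{split} the block signs multiply, so the parity of hyperbolic summands matches the sign of $\det\left[\begin{smallmatrix}A & \bar B \\ B & \bar A\end{smallmatrix}\right]$.

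Each $1$-dimensional summand reduces, after a further $G$-congruence, to $(1,b_j)$ with $b_j\ge 0$. In the elliptic case $b_j<1$ the path $(1,(1-t)b_j)$ is nondegenerate and ends at $(1,0)$, and in the hyperbolic case $b_j>1$ the path $((1-t)/b_j,1)$ has determinant $(1-t)^2/b_j^2-1<0$ throughout and ends at $(0,1)$. Each $2$-dimensional summand is connected by a coordinated deformation of $A_j$ and $B_j$ to the appropriate $2$-dimensional model. Assembling the block homotopies: in the elliptic case everything lands in $(I_n,0_n)$; in the hyperbolic case, any excess hyperbolic $1$-dimensional blocks are paired off into elliptic $2$-dimensional pieces (two hyperbolic $1$D summands combine into an elliptic $2$D piece, since hyperbolic $\times$ hyperbolic $=$ elliptic by Remark~\ref{split}), leaving exactly one hyperbolic summand $(0,1)$ alongside elliptic blocks that together give $(0,1)\oplus(I_{n-1},0_{n-1})$.

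The main technical obstacle is twofold. First, Proposition~\ref{prop} delivers a canonical form under consimilarity $A\mapsto SA\bar S^{-1}$, whereas the $G$-action uses $^*$-congruence on $A$ and $^T$-congruence on $B$; promoting the consimilarity block structure to a $G$-congruent splitting of the full pair $(A,B)$ requires combining $S$ with a Takagi factorization of $B$ and some additional care, possibly via a further small perturbation to ensure compatibility. Second, the $2$-dimensional blocks demand a coordinated deformation of $A_j$ and $B_j$ together: the naive linear scaling $B\to(1-t)B$ can cross the zero-determinant locus, as in the example $A=I_2$, $B=\diag(2,3)$, where the determinant $(1-4s^2)(1-9s^2)$ with $s=1-t$ becomes negative on $s\in(1/3,1/2)$. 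I would introduce a sufficiently strong off-diagonal coupling in $B_j$ calibrated to dominate the problematic mixed terms, preserving nondegeneracy throughout the block homotopy.
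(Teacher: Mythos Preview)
Your proposal has the right ingredients but assembles them in an order that leaves a genuine gap. You apply Proposition~\ref{prop} to $A$ first and then hope to ``promote'' the resulting block structure to a $G$-congruent splitting of the pair via Takagi on $B$. But consimilarity $A\mapsto SA\bar S^{-1}$ is not a $G$-move, and once you follow it with a $^T$-congruence of $B$ (which acts on $A$ by $^*$-congruence), the block form of $A$ is destroyed. You flag this yourself as a ``technical obstacle'', but you do not resolve it, and there is no reason to expect a simultaneous block-diagonalization of $(A,B)$ via $G$ to exist in general.

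The paper's fix is to reverse the order and to stop insisting that each step be a $G$-congruence. First perturb so that $B$ is nonsingular and apply Takagi to reduce to $(A,I)$; this \emph{is} a $G$-congruence. Now observe the identity
\[
\det\begin{pmatrix}A & I\\ I & \bar A\end{pmatrix}=\det(A\bar A - I)=\prod_{k}(\lambda_k-1),
\]
where the $\lambda_k$ are the eigenvalues of $A\bar A$. Since consimilarity preserves the similarity class of $A\bar A$, any path $(S_tA\bar S_t^{-1},I)$ with $S_t\in\Gl(n,\C)$ keeps this determinant constant, hence is a nondegenerate homotopy even though it is not a path of $G$-congruent pairs. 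Ending at the canonical form $D\oplus\Lambda$ of Proposition~\ref{prop}, you now have $A$ block diagonal and $B=I$ trivially block diagonal, so the splitting of Remark~\ref{split} is automatic. This single observation dissolves both of your stated obstacles: consimilarity never needs to be interpreted as a $G$-action, and $B$ never needs a separate block reduction.

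A secondary point: your treatment of the $2$-dimensional blocks is a sketch (``introduce off-diagonal coupling calibrated to dominate mixed terms''). The paper carries this out with explicit paths and determinant computations for each of the three block types $\bigl(\left[\begin{smallmatrix}0&1\\ b&0\end{smallmatrix}\right],I_2\bigr)$, $(d_k,1)$ with $d_k>1$, and $\bigl(\left[\begin{smallmatrix}d_j&0\\0&d_{j+1}\end{smallmatrix}\right],I_2\bigr)$ with $d_j,d_{j+1}<1$; in particular the last case is handled by first shrinking $A$ to $0$ and then using the path $\bigl(\left[\begin{smallmatrix}t& e^{i\pi/4}x(t)\\ -e^{i\pi/4}x(t)& t\end{smallmatrix}\right],(1-t)I_2\bigr)$, whose determinant $(2t-1)^2+|x|^2(|x|^2+2(1-t)^2)$ is visibly positive. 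Your example $A=I_2$, $B=\diag(2,3)$ falls under this case after the reduction, and is handled without ad hoc coupling.
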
  
\begin{proof}
Let $(A,B)\in \cM_n$ be a nondegenerate pair, meaning that $(A,B)$ is contained in either $\cM_n^+$ or $\cM_n^-$. By a small perturbation that can be achieved by nondegenerate homotopy, we can assume that both $A$ and $B$ are nonsingular. Since $B$ is symmetric, there exists a nonsingular matrix $S$, so that $S^TBS=I$ (\cite[Theorem 4.5.12]{HJ}). Let $A_1=S^*AS$. Since $(A,B)$ is $G$-congruent to $(A_1,I)$, the pairs can be joined by a nondegenerate homotopy. For convenience we drop the subscript and work with pairs $(A,I)$. Then
\[\left|\begin{matrix}A&I\\I&\bar A\end{matrix}\right| 
=(-1)^n\left|\begin{matrix}I&\bar A\\A& I\end{matrix}\right|=\det(A\bar A-I)=\prod_{k=1}^n(\lambda_k-1),\]
where $\lambda_k$ are eigenvalues of $A\bar A$. We use Proposition \ref{canonical} to get (perhaps after a small perturbation of A, that does not change the type of the pair (A,I)) a nonsingular $S$, so $SA\bar S^{-1}$ is of the form (\ref{diagonal}), where $D$ is a diagonal $n'\times n'$ matrix, and $\Lambda$ is block diagonal $(n-n')\times (n-n')$ matrix (note that $n-n'$ is even), with blocks of the form 
$\left[
\begin{smallmatrix}
0 & 1 \\                                                                                     
\lambda & 0
\end{smallmatrix}\right]$ and $\lambda$ either negative or nonreal.  Let $S_t$ be a homotopy from $I$ to $S$ in $\Gl(n,\C)$. Since $\left(S_t A\overline{S_t}^{-1}\right)\overline{\left(S_t A\  \overline{S_t}^{-1}\right)}=\left(S_t A\bar AS_t^{-1}\right)$, we have
\[\left|\begin{matrix}A&I\\I&\bar A\end{matrix}\right|=\left|\begin{matrix}S_tA\overline{S_t}^{-1} &I\\I&\overline{S_tA\overline{S_t}^{-1}}\end{matrix}\right|.\]
The homotopy of pairs $(S_tA\overline{S_t}^{-1},I)$ from  $(A,I)$ to $(D\oplus\Lambda,I_{n'}\oplus I_{n-n'})$ is therefore nondegenerate. We can assume that the first $l$ diagonal entries of $D=\diag(d_1,d_2,\ldots,d_{n'})$ are strictly smaller than $1$, and the other $n'-l$ are strictly grater than $1$. There are no entries equal to $1$, since the pair is nondegenerate. We can write the pair  $(D\oplus\Lambda,I_{n'}\oplus I_{n-n'})$ as a direct sum (see Remark \ref{split}) as either
\begin{align*}(D\oplus\Lambda,I_{n'}\oplus I_{n-n'})= 
(d_1,1)\bigoplus_{k=1}^{\lfloor l/2\rfloor}\left(\left[\begin{smallmatrix}d_{2k}&0\\0&d_{2k+1}\end{smallmatrix}\right],I_2\right) 
&\bigoplus_{k=l+1}^{n'}(d_k,1)\bigoplus_{k=1}^{\frac{n-n'}{2}}\left(\left[\begin{smallmatrix}0&1\\ b_k&0\end{smallmatrix}\right],I_2\right),
\end{align*}
if $l$ is odd, or
\[(D\oplus\Lambda,I_{n'}\oplus I_{n-n'})=\bigoplus_{k=1}^{l/2}\left(\left[\begin{smallmatrix}d_{2k-1}&0\\0&d_{2k}\end{smallmatrix}\right],I_2\right)\bigoplus_{k=l+1}^{n'}(d_k,1)\bigoplus_{k=1}^{\frac{n-n'}{2}}\left(\left[\begin{smallmatrix}0&1\\ b_k&0\end{smallmatrix}\right],I_2\right),\]
if $l$ is even. Here we have used $b_1,b_2,\ldots,b_{(n-n')/2}$ to denote all strictly negative or nonreal eigenvalues of $A\bar A$ appearing in the canonical decomposition of $A$ under consimilarity. We now show that in the case $l$ even, all the summands can be homotoped by nondegenerate homotopies of pairs into either $(1,0)\in\cM_1^+$ or $(I_2,0_2)\in\cM_2^+$, and in the case $n$ odd, this can be done for all but the first summand, which can be homotoped into the pair $(0,1)\in\cM_1^-.$  

Let $\left(\left[\begin{smallmatrix}0&1\\b&0\end{smallmatrix}\right],I_2\right)$ be one of the direct summands with $b$ either strictly negative or nonreal. Let $b(t)$ be a homotopy in $\C$ from $b$ to $-1$ that does not intersect $[0,\infty)$. We use the homotopy of pairs $\left(\left[\begin{smallmatrix}0&1\\b(t)&0\end{smallmatrix}\right],(1-t)I_2\right)$ to get to the pair $\left(\left[\begin{smallmatrix}0&1\\-1&0\end{smallmatrix}\right],0_2\right)$. Since
\[\det\left[\begin{smallmatrix}0&1&1-t&0\\b(t)&0&0&1-t\\1-t&0&0&1\\0&1-t&\overline{b(t)}&0\end{smallmatrix}\right]=|b_t-(1-t)^2|^2>0,\] this homotopy is nondegenerate. This pair can be homotoped by a nondegenerate homotopy $\left(\left[\begin{smallmatrix}0&1\\e^{(1-t)i\pi}&0\end{smallmatrix}\right],0_2\right)$ to $\left(\left[\begin{smallmatrix}0&1\\1&0\end{smallmatrix}\right],0_2\right)$. The $2\times 2$ matrix $\left[\begin{smallmatrix}0&1\\1&0\end{smallmatrix}\right]$ is $^*$congruent to $\left[\begin{smallmatrix}1&0\\0&-1\end{smallmatrix}\right]$ by a matrix $\frac{1}{\sqrt{2}}\left[\begin{smallmatrix}1&-1\\1&1\end{smallmatrix}\right]$. Let $H_t$ be a homotopy in $\Gl(2,\C)$ from $I_2$ to $\frac{1}{\sqrt{2}}\left[\begin{smallmatrix}1&-1\\1&1\end{smallmatrix}\right]$. Then the nondegenerate homotopy $\left(H_t^*\left[\begin{smallmatrix}0&1\\1&0\end{smallmatrix}\right]H,0_2\right)$ gives us the pair
$\left(\left[\begin{smallmatrix}1&0\\0&-1\end{smallmatrix}\right],0_2\right).$ We follow this by the nondegenerate homotopy $\left(\left[\begin{smallmatrix}1&0\\0&e^{(1-t)i\pi}\end{smallmatrix}\right],0_2\right)$ to get to $\left(I_2,0_2\right).$ 

The summands $(d_k,1)\in\cM_1^+$, with $k=l+1,\ldots,n'$ can easily be deformed to $(1,0)$ by a homotopy $(t+(1-t)d_k,(1-t))$. This homotopy is nondegenerate, since 
\[\det\left[\begin{smallmatrix}t+(1-t)d_k&1-t\\1-t&t+(1-t)d_k\end{smallmatrix}\right]=(t+(1-t)(d_k-1))((1-t)d_k+1)>0.\]
We have used that $d_k>1$ for $k=l+1,\ldots,n'$. 

The summands $\left(\left[\begin{smallmatrix}d_j&0\\0&d_{j+1}\end{smallmatrix}\right],I_2\right)$, where both $0<d_j,d_{j+1}<1$ can be first homotoped to $\left(0_2,I_2\right)$ by a simple linear nondegenerate homotopy $\left((1-t)\left[\begin{smallmatrix}d_{j}&0\\0&d_{j+1}\end{smallmatrix}\right],I_2\right)$. Let now $x(t)$ be some small smooth real function on $[0,1]$ that is compactly supported in $(0,1)$, with $x(1/2)\ne 0$. The homotopy $\left(\left[\begin{smallmatrix}t&e^{i\pi/4}x(t)\\-e^{i\pi/4}x(t)&t\end{smallmatrix}\right],(1-t)I_2\right),$ is nondegenerate since the determinant \[\det\left[\begin{smallmatrix}t&e^{i\pi/4}x&1-t&0\\-e^{i\pi/4}x&t&0&1-t\\1-t&0&t&e^{-i\pi/4}x\\0&1-t&-e^{-i\pi/4}x&t\end{smallmatrix}\right]=(2 t-1)^2+|x|^2(|x|^2+2(1-t)^2)>0.\] Joining all the homotopies, we get to the pair $(I_2,0_2)$.

Finally,  if $l$ is odd, the pair $(d_1,1)$ can be homotoped to $(0,1)$ by $((1-t),1)$.

Using the Remark \ref{split}, combining the homotopies of each of the blocks of $(D\oplus\Lambda,I_n)$, we get a nondegenerate homotopy to either $(I_n,0_n)$, if $l$ is even, or $(0,1)\oplus (I_{n-1},0_{n-1})$ if $l$ is odd. This completes the proof. 
\end{proof}

\begin{proof}[Proof of theorem \ref{thm1}] After a small perturbation, we can assume that $Y$ has only isolated elliptic or hyperbolic complex points, and we can also assume those points are quadratic, meaning that $o(|z|^2)=0$ in the form (\ref{form}) in some local coordinates. This is done by a small isotopy in the neighborhoods of complex points, canceling the $o(|z|^2)$ term. Let $p$ be any complex point on $Y$, represented locally by a pair $(A,B)\in\cM_n$. Let $(A_t,B_t)$ be a nondegenerate homotopy of pairs, constructed in Proposition \ref{connected}, connecting either $(I_n,0_n)$, if $p$ is elliptic, or $(0,1)\oplus (I_{n-1},0_{n-1})$, if $p$ is hyperbolic, to $(A,B)$. We can assume that the homotopy is constant near $t=0$ and $t=1$. Let $\varepsilon$ be small and let $\widetilde Y$ coincide with $Y$ away from $|z|<\varepsilon$, and is given by
$$w=\bar{z}^TA(\sqrt[n]{|z|/\varepsilon})z+\Real (z^TB(\sqrt[n]{|z|/\varepsilon})z),$$
for $|z|\le\varepsilon$. It is shown in the proof of Theorem 1 in \cite{S1} that $p=0$ is the only complex point of the surface $\tilde Y$ in a neighborhood of $|z|\le\epsilon$. By construction, the complex point $p=0$ is of the desired form. 
\end{proof}

To prove Theorem $2$, we need the following  two lemmas.

\begin{lemma}\label{lem1} Let $(z_1,\ldots,z_n,w)=(z,w)$ be coordinates on $\C^n\times \C$. Let $Y=\{(z,w)\in\C^n;w=\frac{1}{2}\bar z_1^2+\frac{1}{2}\bar z_2^2+\cdots+\frac{1}{2}\bar z_n^2\}.$   The function 
\[f(z,w)=(1+|z|^2)\left|w-\frac{1}{2}\bar z_1^2-\frac{1}{2}\bar z_2^2-\cdots-\frac{1}{2}\bar z_n^2\right|^2\]
is plurisubharmonic in a neighborhood $U$ of $0$ and strictly plurisubharmonic in $U\backslash Y$. Furthermore, the Levi form of $f$, restricted to $TY/T^\C Y$, is strictly positive. 
\end{lemma}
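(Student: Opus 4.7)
The plan is to use the defining function $\rho = w - \tfrac{1}{2}\sum\bar z_k^2$ for $Y$ and to abbreviate $\phi = 1 + |z|^2$, so that $f = \phi|\rho|^2$ and $Y = \{\rho=0\}$. The key observation is that although $\rho$ is not holomorphic, it has the pleasant property $\partial\rho = dw$ and $\bar\partial\rho = -\sum\bar z_k d\bar z_k$; in particular $\partial_\alpha\partial_{\bar\beta}\rho \equiv 0$. A direct computation of the second derivatives of $f = \phi\rho\bar\rho$ then gives, for a $(1,0)$-vector $X = \sum a_j\partial_{z_j} + b\,\partial_w$,
\[
L_f(X,X) \;=\; \phi|\alpha|^2 + \phi|b|^2 + |\rho|^2|a|^2 - 2\Real(\rho\alpha\bar\beta) + 2\Real(\rho\bar b\beta),
\]
where $\alpha := \sum z_j a_j$ and $\beta := \sum \bar z_j a_j$. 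The term $\phi|b|^2$ comes from $\partial_w\partial_{\bar w}f = \phi$, and the two cross terms arise from $\partial_{z_j}\bar\rho = -z_j$ and $\partial_{\bar z_k}\rho = -\bar z_k$.

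The main algebraic step is to complete the square separately in $\alpha$ and in $b$, absorbing each cross term into a perfect square. This rewrites the formula as
\[
L_f(X,X) \;=\; \phi\Bigl|\alpha - \tfrac{\bar\rho\beta}{\phi}\Bigr|^2 + \phi\Bigl|b + \tfrac{\rho\beta}{\phi}\Bigr|^2 + |\rho|^2\Bigl(|a|^2 - \tfrac{2|\beta|^2}{\phi}\Bigr).
\]
The first two summands are manifestly nonnegative, so the only possible obstruction to positivity is the third. Here Cauchy--Schwarz gives $|\beta|^2 = |\bar z\cdot a|^2 \le |z|^2|a|^2$, whence $\tfrac{2|\beta|^2}{\phi} \le \tfrac{2|z|^2}{1+|z|^2}|a|^2 \le |a|^2$ as soon as $|z|\le 1$, strictly so when $|z|<1$ and $a \ne 0$. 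Taking $U = \{|z|<1\}$, this yields $L_f \ge 0$ on $U$, so $f$ is plurisubharmonic there.

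To finish, I would analyse when $L_f(X,X) = 0$ at a point of $U$, which forces all three summands above to vanish. Off $Y$ (where $\rho \ne 0$), the third summand forces $a = 0$ by strict Cauchy--Schwarz, and then the first two summands give $\alpha = 0$ and $b = 0$, so $X = 0$; hence $f$ is strictly plurisubharmonic on $U\setminus Y$. On $Y$ (where $\rho = 0$), the formula reduces to $L_f(X,X) = \phi(|\alpha|^2 + |b|^2)$, which vanishes precisely on $\{\alpha = 0,\; b = 0\}$. A $(1,0)$-vector $(a,b)$ lies in $T_pY$ iff $b = \bar\alpha$, and in $T^\C_pY$ iff additionally $\alpha = 0$ (and hence $b = 0$); so this null locus is exactly $T^\C_pY$, and $L_f|_{T_pY}$ descends to a strictly positive form on $T_pY/T^\C_pY$. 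The main obstacle is really just the bookkeeping in the Levi form calculation together with spotting the completion-of-squares identity; once those are in place, a single Cauchy--Schwarz step closes everything in a few lines.
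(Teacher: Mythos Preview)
Your argument is correct and follows essentially the same route as the paper: compute the Levi form of $(1+|z|^2)|\psi|^2$ explicitly and then rearrange it as a sum of squares plus a remainder controlled by Cauchy--Schwarz. The only difference is cosmetic: you keep the factor $1+|z|^2$ and complete the square exactly in the complex quantities $\alpha$ and $b$, obtaining an identity that works on $\{|z|<1\}$, whereas the paper first discards some $(1+|z|^2)$ factors via $\ge$ and then completes the square in absolute values, landing on the smaller region $\{|z|^2<\tfrac12\}$; the analysis of the null locus on $Y$ is identical in both.
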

\begin{proof} Let us denote $\psi(z,w)=w-\frac{1}{2}\bar z_1^2-\frac{1}{2}\bar z_2^2-\cdots-\frac{1}{2}\bar z_n^2$. Then the Levi form of
$(1+|z|^2)\psi\bar{\psi}$ equals to

\begin{align*} &\frac{i}{2}\left((1+|z|^2)dw\wedge d\bar w +\psi\bar\psi\sum_{k=1}^n dz_k\wedge d\bar z_k 
+(1+|z|^2)(\sum_{k=1}^{n}z_idz_i)\wedge(\sum_{k=1}^n\bar{z_k}d\bar z_k)\right.\\ &\left.-\psi(\sum_{k=1}^nz_kdz_k)\wedge(\sum_{k=1}^{n}z_k d\bar{z_k})-\bar\psi(\sum_{k=1}^n\bar z_k dz_k)\wedge(\sum_{k=1}^n \bar z_k d\bar z_k ) 
+\bar\psi dw\wedge \sum_{k=1}^{n}z_kd\bar z_k\right.\\
&\left. -\psi d\bar{w}\wedge\sum_{k=1}^{n}\bar z_k dz_k\right).
\end{align*}
The Levi form, evaluated on a tangent vector $(Z,W)$, gives us
\begin{align*}&(1+|z|^2)|W|^2+\psi\bar{\psi}|Z|^2+(1+|z|^2)|zZ|^2-2\Real(\psi(zZ)(z\bar Z))\\&+2\Real(\psi\bar W(\bar z Z)).\end{align*}	
If $Z=0$, the form is clearly positive. So we can assume $|Z|=1$ and let $\alpha,\beta$, $0\le|\alpha|,|\beta|\le 1$ be such that $zZ=\alpha |z|$ and $z\bar Z=\beta |z|$. We get
\begin{align*}\nonumber&(1+|z|^2)|W|^2+|\psi|^2+(1+|z|^2)|z|^2|\alpha|^2-2\Real(\psi|z|^2\alpha\beta)+\\&2\Real(\psi\bar W|z|\bar\beta)\ge |W|^2+|\psi|^2+|z|^2|\alpha|^2-2|\psi||z|^2|\alpha||\beta|-2|\psi||W||z||\beta|\\&=(|W|-|\psi||z||\beta|)^2+|z|^2(|\alpha|-|\psi||\beta|)^2+|\psi|^2(1-2|\beta|^2|z|^2).\end{align*}	
As long as $|z|^2<\frac{1}{2}$ and $|\psi|\ne 0$, this is strictly positive. Restricted to $\psi=0$, the form only vanishes if $W=0$ and $zZ=0$, which is exactly along $T^\C Y$. 
\end{proof}

\begin{lemma}\label{lem2} Let $(z_1,z_2\ldots,z_n,w)=(z_1,z',w)$ be coordinates on $\C \times\C^{n-1}\times \C$. Let $Y=\{(z_1,z',w)\in\C^{n+1};w=|z_1|^2+\frac{1}{2}\bar z_2^2+\frac{1}{2}\bar z_3^2+\cdots+\frac{1}{2}\bar z_n^2\}.$ The function 
\[f(z,w)=(1+|z'|^2)\left|w-|z_1|^2-\frac{1}{2}\bar z_2^2-\frac{1}{2}\bar z_3^2-\cdots-\frac{1}{2}\bar z_n^2\right|^2\]
is $n$-pseudoconvex in $U$ and $n$-strictly pseudoconvex in $U\backslash Y$, where $U$ is some small neighborhood of $0$. Furthermore, the Levi form of $f$ is semipositive on $U\cap Y$ and  is strictly positive on $TY/T^\C Y$. 
\end{lemma}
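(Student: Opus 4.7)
The plan is to set $\psi = w - |z_1|^2 - \tfrac{1}{2}\bar z_2^2 - \cdots - \tfrac{1}{2}\bar z_n^2$, write $f = g\,h$ with $g = 1 + |z'|^2$ and $h = |\psi|^2$, and then establish the conclusions in two stages: (a) semipositivity on $U \cap Y$ together with strict positivity modulo $T^\C Y$, and (b) $n$-pseudoconvexity on $U$ and $n$-strict pseudoconvexity on $U \setminus Y$. The core idea for (b) will be to freeze $z_1$ and reduce to Lemma~\ref{lem1} in one lower complex dimension via an affine $w$-shift.

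For (a), I would note that on $Y = \{\psi=0\}$ the function $h$ together with its first-order derivatives vanishes, so the Leibniz rule reduces the complex Hessian to $f_{i\bar j}|_Y = g\, h_{i\bar j}|_Y$, and differentiating $h = \psi\bar\psi$ twice gives $h_{i\bar j}|_Y = \psi_i\bar\psi_{\bar j} + \psi_{\bar j}\bar\psi_i$. Evaluated on a $(1,0)$-vector $V$, this produces
\[
L_f|_Y(V,\bar V) \;=\; g\bigl(|\partial\psi(V)|^2 + |\partial\bar\psi(V)|^2\bigr) \;\geq\; 0,
\]
with equality if and only if $\partial\psi(V) = \partial\bar\psi(V) = 0$, i.e.\ if and only if $V \in T^{(1,0)}_p Y$. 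This handles both claims in (a) at once.

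For (b), the plan is to freeze $z_1$ and view $\tilde f(z',w) := f(z_1, z', w)$ as a function on $\C^{n-1}\times\C$. The substitution $w' = w - |z_1|^2$ is an affine holomorphic change in $w$ with $z_1$ treated as a parameter, and in $(z', w')$ coordinates $\tilde f$ becomes
\[
(1+|z'|^2)\bigl|w' - \tfrac{1}{2}\bar z_2^2 - \cdots - \tfrac{1}{2}\bar z_n^2\bigr|^2,
\]
which is precisely the function of Lemma~\ref{lem1} in the variables $(z',w')$. Invoking Lemma~\ref{lem1} then produces a neighborhood on which $\tilde f$ is plurisubharmonic, and strictly plurisubharmonic off $Y$, uniformly in small $z_1$. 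Because the shift preserves the holomorphic vector fields $\partial/\partial z_k$ ($k\ge 2$) and $\partial/\partial w$, this identifies the $n\times n$ principal submatrix of $[f_{i\bar j}]$ obtained by deleting the $z_1$-row and column with the Levi matrix of Lemma~\ref{lem1}'s function, so this submatrix will be positive semidefinite on $U$ and positive definite on $U\setminus Y$.

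The final step applies Cauchy's interlacing theorem: if an $n\times n$ principal submatrix of a Hermitian $(n+1)\times(n+1)$ matrix has $n$ nonnegative (respectively, strictly positive) eigenvalues, then the full matrix has at least $n$ such eigenvalues. This yields $n$-pseudoconvexity on $U$ and $n$-strict pseudoconvexity on $U\setminus Y$. The main obstacle will be making the reduction in (b) rigorous: the substitution $w' = w - |z_1|^2$ is not a holomorphic change of coordinates on all of $\C^{n+1}$, so Lemma~\ref{lem1} cannot be applied to $f$ directly — it can only be applied in each slice $z_1 = \mathrm{const}$, and it is exactly this slice-by-slice structure that Cauchy interlacing is designed to exploit.
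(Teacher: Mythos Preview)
Your argument is correct and arrives at the same conclusion as the paper, but the organization is cleaner. The paper computes the full Levi form of $f$ explicitly on a vector $(Z_1,Z',W)$, then observes (by redoing the Lemma~\ref{lem1} estimate) that it is strictly positive on nonzero vectors with $Z_1=0$ when $\psi\neq 0$, and checks directly that on $\{\psi=0\}$ it vanishes exactly along $T^\C Y$. You replace the explicit calculation with two structural observations: for (a), the product rule on $g|\psi|^2$ collapses the Levi form on $Y$ to $g\bigl(|\partial\psi(V)|^2+|\partial\bar\psi(V)|^2\bigr)$, whose zero set is by definition $T^{(1,0)}Y$; for (b), restricting to a slice $z_1=\mathrm{const}$ and shifting $w$ reduces literally to Lemma~\ref{lem1} in one dimension lower, and Cauchy interlacing upgrades positivity of the $n\times n$ principal block to $n$ positive eigenvalues of the full Hessian. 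The underlying mechanism---positivity on the hyperplane $\{Z_1=0\}$---is identical; what you gain is that Lemma~\ref{lem1} is used as a black box rather than re-proved inline, and the argument in (a) applies verbatim to any function of the form $g|\psi|^2$ with $g>0$. One small point worth making explicit: the neighborhood produced by Lemma~\ref{lem1} depends only on $|z'|$ (it is $|z'|^2<\tfrac12$ in the paper's proof), so the slice estimate is indeed uniform in $z_1$, and $U$ can be taken as a genuine neighborhood of the origin in $\C^{n+1}$.
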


\begin{proof} Let us write tangent vectors to $\C^{n+1}$ as $(Z_1,Z',W)$ corresponding to coordinates $(z_1,z',w)$ and let $\psi(z_1,z',w)=w-|z_1|^2-\frac{1}{2}\bar z_2^2-\frac{1}{2}\bar z_3^2-\cdots-\frac{1}{2}\bar z_n^2$. The Levi form of $f$ calculated on $(Z_1,Z',W)$ is
\begin{align*}&|\psi|^2|Z'|^2+(1+|z'|^2)|W|^2+2(1+|z'|^2)|z_1|^2|Z_1|^2+(1+|z'|^2)|z'Z'|^2\\
&-2(1+|z'|^2)\Real(z_1\bar Z_1 W)+2(1+|z'|^2)\Real((z_1\bar Z_1)(z'Z'))\\
&+2\Real(\psi(\bar z'Z')\bar W)-2\Real (\psi(z'Z')(z'\bar Z'))-2\Real(\psi(z_1\bar Z_1)(z'Z'))\\
&-2\Real(\psi(\bar z_1 Z_1)(z'\bar Z'))-2\Real\psi(1+|z'|^2)|Z_1|^2.
\end{align*}
An almost identical calculation, as in the proof of Lemma \ref{lem1}, shows that the above expression is strictly positive on nonzero vectors of the form $(0,Z',W)$, if $\phi\ne 0$. The form is negative on $(1,0,0)$ if $\phi\ne 0$. Along $\psi=0$, it is easily seen that the form only vanishes if both $\bar z_1 Z_1+z'Z'\ne 0$ and $W=\bar z_1 Z_1$ hold, which is exactly on $T^\C Y$. This completes the proof.   
\end{proof}

\begin{proof}[Proof of Theorem \ref{thm2}] Let $Y$ be a smoothly embedded real compact $n$-manifold in a complex $n+1$ dimensional manifold $X$. After a smooth $\cC^0$ small isotopy, we can assume that complex points of $Y$ are isolated and are either of the type $w=\frac{1}{2}(\bar z_1^2+\bar z_2^2+\cdots+\bar z_n^2)$, or $w=|z_1|^2+\frac{1}{2}(\bar z_2^2+\bar z_2^3+\cdots+\bar z_n^2)$ ( Theorem \ref{thm1}).  Let $p_1,\ldots,p_k$ be complex points of $Y$ and let $f_1,\ldots,f_n$ be functions constructed in Lemma \ref{lem1} or Lemma \ref{lem2}, defined on small disjunct neighborhoods $U_1,\ldots,U_k$ of complex points. By (\cite{Ch}, Proposition 6.5), there exists a $2$-strictly pseudoconvex nonnegative function $f$, defined in a neighborhood of $Y\backslash\{U_1\cup\cdots\cup U_k\}$ in $X$, vanishing exactly on $Y$, whose Levi form is nonnegative along $Y$ and strictly positive on $TY/T^\C Y$. 
By patching $f$ with functions $f_1,\ldots,f_n$ we get a nonnegative function $\phi$, defined in a neighborhood $V$ of $Y$ with the following properties:
\begin{itemize}[leftmargin=*]
\item $\phi\ge 0$ and $\{\phi=0\}=Y$,
\item $\phi$ is $2$-strictly pseudoconvex, except at complex points, 
\item there exist positive constants $c,\ C$ so that $c (\dist(q,Y))^2\le\phi(q)\le C (\dist(q,Y))^2$ and $c(\dist(q,Y))\le |\grad \phi(q)|\le C(\dist(q,Y))$.  
\end{itemize} 
where $\dist(q,Y)$ is the distance of $q$ to $Y$ in some smooth metric on $X$.
Sublevel sets of $\phi$ are $n$-strictly pseudoconvex and give a tubular neighborhood basis of $Y$.  
\end{proof}

\begin{remark} In the case of real surface in complex surfaces, the real surface is always locally polynomially convex at hyperbolic complex points \cite{FS}, while the local polynomial hull at elliptic complex points is $3$ dimensional and foliated by Bishop discs. In higher dimensions, the local polynomial hall at complex points of the form $w=|z|^2+\sum_{k=1}^{n}\frac{\gamma_k}{2}\left(z^2+\bar z^2\right)+o(|z|^2),$ $0\le\gamma_1,\ldots,\gamma_n<1$ is also $2n+1$ dimensional and foliated by Levi flat hypersurfaces, if the nearby points are all nonminimal \cite{DTZ1}. We have shown above, if $n$ is even, that such a complex point can be put in a form
$w=\bar z_1^2+\bar z_2^2+\cdots+\bar z_n^2$
by a small isotopy, and by Lemma \ref{lem1} the new surface is locally polynomially convex at the complex point.  
\end{remark}

\bibliographystyle{amsart}

\end{document}